\newtheorem{thm}{Theorem}[section]
\newcommand{\A}{\mathcal{A}}
\newcommand{\B}{\mathcal{B}}
\newcommand{\C}{\mathcal{C}}
\title{Forbidding a Set Difference of Size $1$}
\author{
Imre Leader\thanks{Department of Pure Mathematics and Mathematical Statistics, Centre for Mathematical Sciences, 
Cambridge CB3 0WB, United Kingdom. E-mail: I.Leader@dpmms.cam.ac.uk} \and 
Eoin Long\thanks{School of Mathematical Sciences, Queen Mary University of London, Mile End Road, London E1 4NS, United
Kingdom. E-mail: E.P.Long@qmul.ac.uk}
}
\begin{document}

\maketitle

\begin{abstract}
 How large can a family ${\mathcal A} \subset {\mathcal P}[n]$ be if it does not contain $A,B$ with $|A\setminus B| = 1$? Our aim in this paper is to show that any such family has size at most $\frac{2+o(1)}{n}\binom {n}{\lfloor n/2\rfloor }$. This is tight up to a multiplicative constant of $2$. We also obtain similar results for families ${\mathcal A} \subset {\mathcal P}[n]$ with $|A\setminus B| \neq k$, showing that they satisfy $|{\mathcal A}|  \leq \frac{C_k}{n^k}\binom {n}{\lfloor n/2\rfloor }$, where $C_k$ is a constant depending only on $k$.
\end{abstract}

\section{Introduction}

A family ${\mathcal A} 
\subset {\mathcal P}[n] = {\mathcal P} ( \{ 1,\ldots,n \})$ is 
said to be a \emph{Sperner family} or \emph{antichain} if $A\not \subset B$ 
for all distinct $A,B\in {\mathcal A}$. Sperner's theorem \cite{sper}, one of the earliest 
result in extremal combinatorics, states that every Sperner family ${\mathcal A}\subset {\mathcal P}[n]$ satisfies 
\begin{equation}
 \label{SperInequality}
 |\A| \leq \binom{n}{\lfloor n/2\rfloor }. 
\end{equation}
[We remark that this paper is self-contained; for background on Sperner's
theorem and related 
results see \cite{com}.]

Kalai \cite{kal} noted that the Sperner condition can be rephrased as follows: ${\mathcal A}$ does not contain two sets $A$ and 
$B$ such that, in the unique subcube of ${\mathcal P}[n]$ spanned by $A$ and $B$, $A$ is the bottom point and $B$ is the top 
point. He asked: what happens if we forbid $A$ and $B$ to be at a different position in this subcube? In particular, he 
asked how large ${\mathcal A} \subset {\mathcal P}[n]$ can be if we forbid $A$ and $B$ to be at a `fixed ratio' $p:q$ in this 
subcube. That is, we forbid $A$ to be $p/(p+q)$ of the way up this subcube and $B$ to be $q/(p+q)$ of the way up this subcube. 
Equivalently, $q|A\setminus B| \neq p|B\setminus A|$ for all 
distinct $A,B\in {\mathcal A}$. Note that the Sperner condition corresponds 
to taking $p=0$ and $q=1$. In \cite{leadlong}, we gave an asymptotically tight answer for all ratios $p:q$, showing that one cannot improve on the `obvious'
example, namely the $q-p$ middle layers of ${\mathcal P}[n]$. 
\begin{thm}[\cite{leadlong}]
 \label{tiltedsperner}
  Let $p,q$ be coprime natural numbers with $q\geq p$. Suppose ${\mathcal A} \subset {\mathcal P}[n]$ does not contain distinct $A,B$ with
  $q|A\setminus B| = p|B\setminus A|$. Then 
\begin{equation}
   |\A| \leq (q-p + o(1))\binom {n}{\lfloor n/2\rfloor }.
\end{equation}
\end{thm}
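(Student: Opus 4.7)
\textbf{Proof plan for Theorem \ref{tiltedsperner}.}

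The lower bound is achieved by taking ${\mathcal A}$ to be any $q-p$ consecutive middle layers: the forbidden relation $q|A\setminus B|=p|B\setminus A|$ with $A\neq B$ forces $|B|-|A|$ (assuming $|B|\geq |A|$) to be a positive multiple of $q-p$ by the coprimality of $p$ and $q$, while two sets in $q-p$ consecutive layers differ in size by at most $q-p-1$.

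For the upper bound the plan is a weighted LYM-type argument using \emph{tilted chains}. A tilted chain is a sequence $B_0,B_1,\ldots,B_L\subset [n]$ in which $B_{i+1}$ is obtained from $B_i$ by adjoining $q$ \emph{fresh} elements (elements not in any $B_j$ with $j\leq i$) and deleting $p$ elements which never reappear. A straightforward induction shows that for $0\leq i<j\leq L$ one has $|B_i\setminus B_j|=p(j-i)$ and $|B_j\setminus B_i|=q(j-i)$, so every pair of distinct sets in a tilted chain is a forbidden pair. Hence $|{\mathcal A}\cap\{B_0,\ldots,B_L\}|\leq 1$ for every tilted chain.

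Next one puts a suitable probability distribution on tilted chains $P$: say with length $L$ a slowly growing function of $n$, the starting size $|B_0|$ uniform in a short window around $n/2-L(q-p)/2$, the set $B_0$ uniform on its layer, and the sequences of removals and additions chosen uniformly subject to the freshness and permanence constraints. Writing $\pi(A):=\Pr[A\in P]$, the bound $|{\mathcal A}\cap P|\leq 1$ combined with linearity of expectation gives $\sum_{A\in {\mathcal A}}\pi(A)\leq 1$. A direct computation (involving binomial coefficients and falling factorials) followed by a Stirling estimate should yield $\pi(A)\geq \frac{1-o(1)}{(q-p)\binom{n}{\lfloor n/2\rfloor}}$ for $A$ with $|A|$ close to $n/2$, from which one obtains $(q-p+o(1))\binom{n}{\lfloor n/2\rfloor}$ as the bound on sets of ${\mathcal A}$ near the middle layer; the contribution of sets whose size deviates from $n/2$ by more than $\omega(\sqrt{n\log n})$ is $o\bigl(\binom{n}{\lfloor n/2\rfloor}\bigr)$ by the usual normal concentration of binomial coefficients.

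The key difficulty is arranging the chain distribution so as to obtain precisely the factor $q-p$ rather than $q-p+O(1)$. The probabilities $\pi(A)$ are not flat across central layers, so one expects that a symmetrisation over starting sizes $|B_0|\in\{k_0,k_0+1,\ldots,k_0+q-p-1\}$ will be needed to average out parity-type effects and deliver the sharp constant; this bookkeeping, together with verifying the freshness/permanence probabilities do not cost more than a $1+o(1)$ factor as $L\to\infty$ slowly, is the technical heart of the argument.
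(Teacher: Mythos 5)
This paper quotes Theorem~\ref{tiltedsperner} from the reference \cite{leadlong} without proof, so there is no in-paper argument to compare against; I am evaluating your plan on its own terms. Your tilted-chain approach is the right sort of idea, but your definition of a tilted chain does not actually deliver the invariant $|B_i\setminus B_j|=p(j-i)$, $|B_j\setminus B_i|=q(j-i)$ on which the whole argument rests, because your rules permit deleting \emph{freshly added} elements. Concretely, take $p=1$, $q=2$, $B_0=\{1,2,3\}$, $B_1=\{2,3,4,5\}$ (add fresh $\{4,5\}$, delete $\{1\}$), $B_2=\{2,3,5,6,7\}$ (add fresh $\{6,7\}$, delete $\{4\}$). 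The element $4$ was added fresh, then deleted, and never reappears, so this is a legal tilted chain under your definition; yet $|B_0\setminus B_2|=1\neq 2=p\cdot 2$ and $|B_2\setminus B_0|=3\neq 4=q\cdot 2$, and $(B_0,B_2)$ is \emph{not} a forbidden pair ($2\cdot 1\neq 1\cdot 3$). So the crucial bound $|{\mathcal A}\cap\{B_0,\dots,B_L\}|\le 1$ would be false. The repair is to require that the $p$ deleted elements at each step always come from $B_0$ (never delete an element added after step $0$); or, more cleanly in the spirit of the permutation method used elsewhere in this paper, define the chain via a uniformly random $\sigma\in S_n$ by $B_i=\sigma(\{1,\dots,k_0-pi\})\cup\sigma(\{k_0+1,\dots,k_0+qi\})$, which makes the invariant immediate.

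With that fix the rest of the plan is workable, and in fact less delicate than you suggest: the construction is exchangeable under permutations of $[n]$, so $B_i$ is uniform on its layer conditional on $|B_0|$, giving $\Pr[B_i=A\mid |B_0|=k_0]=1/\binom{n}{|A|}$ whenever $|A|=k_0+i(q-p)$. Taking $L\to\infty$ slowly and $|B_0|$ uniform on a window of width roughly $L(q-p)$ around $n/2-L(q-p)/2$, a central set $A$ is covered by roughly $L$ choices of $k_0$, so $\pi(A)\to 1/\bigl((q-p)\binom{n}{|A|}\bigr)\geq (1-o(1))/\bigl((q-p)\binom{n}{\lfloor n/2\rfloor}\bigr)$; no separate symmetrisation over $q-p$ residue classes is needed beyond choosing the window width to be a multiple of $q-p$. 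Your lower-bound example via the $q-p$ middle layers is correct.
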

Up to the $o(1)$ term, this is best possible. Indeed, the proof of Theorem \ref{tiltedsperner} in \cite{leadlong} also gives the exact maximum size of such ${\mathcal A}$ for 
infinitely many values of $n$.

Another natural question considered in \cite{leadlong} asks how large a family ${\mathcal A} \subset {\mathcal P}[n]$ can be if, 
instead of forbidding a fixed ratio, we forbid a `fixed distance' in these subcubes. For example, how large can ${\mathcal A} \subset {\mathcal P}[n]$ be if 
$A$ is not at distance $1$ from the bottom of the subcube spanned with $B$ for all $A,B \in {\mathcal A}$? Equivalently, 
$|A\setminus B| \neq 1$ for all $A,B \in {\mathcal A}$. Here the following family $\mathcal A^*$ provides a lower bound: 
let $\mathcal A^*$ consist of all sets $A$ of size $\lfloor n/2\rfloor $ such that $\sum _{i\in A} i \equiv r \pmod {n}$ 
where $r\in \{0,\ldots ,n-1\}$ is chosen to maximise $|\mathcal A^*|$. Such a choice of $r$ gives 
$| \mathcal A^* |\geq {\frac {1}{n}} {\binom{n} {\lfloor n/2\rfloor }}$. 
Note that if we had $|A\backslash B|=1$ for some 
$A,B\in \mathcal A^*$, since $|A|=|B|$, we would also 
have $|B\backslash A|=1$ -- letting $A\backslash B=\{i\}$ and 
$B\backslash A=\{j\}$ we then have $i-j\equiv 0 \pmod {n}$ giving $i=j$, a contradiction. 

In \cite{leadlong}, we showed that any such family ${\mathcal A} \subset {\mathcal P}[n]$ satisfies $|\A | \leq \frac{C}{n}2^n = 
O(\frac{1}{n^{1/2}}\binom {n}{\lfloor n/2\rfloor })$ for some absolute constant $C>0$. We conjectured that the family ${\mathcal A}^{*}$ 
constructed in the previous paragraph is asymptotically maximal (Conjecture 5 of \cite{leadlong}). In Section 2, we prove that this is true up to a factor of $2$.

\begin{thm}
 \label{stickoutexactly1}
 Suppose that ${\mathcal A} \subset {\mathcal P}[n]$ is a family of sets with $|A\setminus B| \neq 1$ for all $A,B\in {\mathcal A}$. 
 Then $|{\mathcal A}| \leq \frac{2 + o(1)}{n} \binom {n}{\lfloor n/2\rfloor }$.
\end{thm}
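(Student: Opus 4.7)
The plan is to derive two elementary double-counting inequalities from the hypothesis and combine them with the concentration of $\binom{n}{k}$ near $k=\lfloor n/2\rfloor$. Throughout, write $\mathcal{A}_k:=\mathcal{A}\cap\binom{[n]}{k}$.

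\emph{A within-layer bound.} For each $k$, consider the bipartite graph on $\mathcal{A}_k\cup\binom{[n]}{k-1}$ with edges $A\sim B$ whenever $B\subset A$. Each $A\in\mathcal{A}_k$ has degree $k$; while if some $B$ were adjacent to two distinct $A_1,A_2\in\mathcal{A}_k$ we would have $A_1=B\cup\{x\}$ and $A_2=B\cup\{y\}$ with $x\neq y$, so $|A_1\setminus A_2|=1$, contradicting the hypothesis. Double-counting the edges therefore gives
\[
k\,|\mathcal{A}_k|\;\leq\;\binom{n}{k-1},\qquad\text{i.e.,}\qquad|\mathcal{A}_k|\;\leq\;\frac{\binom{n}{k}}{n-k+1}.
\]
For $k$ close to $n/2$ this reads $|\mathcal{A}_k|\leq(2+o(1))\binom{n}{k}/n$, which is already the right order in a single layer.

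\emph{A cross-layer bound.} The same argument shows that the lower shadow $\Delta(\mathcal{A}_k)\subset\binom{[n]}{k-1}$ has size at least $k|\mathcal{A}_k|$, and the hypothesis forces $\Delta(\mathcal{A}_k)\cap\mathcal{A}_{k-1}=\emptyset$ (any common element would witness a forbidden pair). Hence $k|\mathcal{A}_k|+|\mathcal{A}_{k-1}|\leq\binom{n}{k-1}$, and symmetrically $(n-k)|\mathcal{A}_k|+|\mathcal{A}_{k+1}|\leq\binom{n}{k+1}$. In particular, saturating the within-layer bound at some $k$ forces $|\mathcal{A}_{k\pm 1}|=0$.

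\emph{Assembly.} Standard binomial tail estimates show that the layers with $|k-n/2|\geq\sqrt{n\log n}$ contain in total only $o(\binom{n}{\lfloor n/2\rfloor}/n)$ sets, a contribution that is absorbed by the $o(1)$. The core task is to bound what happens in the middle band, in which the within-layer bound alone gives $|\mathcal{A}_k|\leq(2+o(1))\binom{n}{k}/n$ per layer. The hope is to combine this with the cross-layer inequality through a weighted averaging argument — weighting sizes so that the bulk of $\binom{n}{k}$ is charged to $k=\lfloor n/2\rfloor$ — to show that, up to an $o(\binom{n}{\lfloor n/2\rfloor}/n)$ error, the family behaves as if concentrated in a single middle layer.

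\emph{Main obstacle.} The two inequalities above taken on their own form an LP whose optimum is $\Theta(2^n/n)$, which is $\Theta(\sqrt{n})$ times the target, since the LP is saturated by configurations where the alternating layers $\mathcal{A}_{\lfloor n/2\rfloor+2j}$ each meet the within-layer bound. The hard part of the argument is therefore to rule such alternating configurations out. I expect this to require an additional combinatorial input: either an iterated Kruskal--Katona-type estimate exploiting how the shadows $\Delta^i(\mathcal{A}_k)$ interact with $\mathcal{A}_{k-i}$ for small $i$, or a carefully weighted averaging over random chains/permutations that can detect correlations between non-adjacent layers. This is the step I expect will absorb most of the real work of the proof.
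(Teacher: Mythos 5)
You have correctly extracted the two elementary constraints, and, more importantly, you have correctly diagnosed that they are insufficient: the LP they define is saturated by alternating-layer configurations of total size $\Theta(2^n/n)$, which exceeds the target $\Theta\bigl(\tbinom{n}{\lfloor n/2\rfloor}/n\bigr)$ by a factor $\Theta(\sqrt{n})$. You flag this yourself, so what you have written is an honest partial analysis rather than a proof. The gap you name is genuine, and it is exactly where all the work lies.

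For orientation, the paper closes the gap with the device you gesture at in your last paragraph, but with a specific twist that your sketch does not supply. Rather than a single random chain (which would only reproduce your per-layer bound, one layer at a time), the paper fixes a random permutation $\sigma$ and considers the family of \emph{partial chains} $C_{i,j}=\{\sigma(1),\dots,\sigma(i)\}\cup\{\sigma(j)\}$ for $i\in I=[1,n/2+n^{2/3}]$ and $j\in J=[n/2+n^{2/3}+1,n]$, so $|J|=\Theta(n)$. The forbidden pattern appears \emph{across} different values of $j$: if $C_{i_1,j_1},C_{i_2,j_2}\in\mathcal{A}$ with $j_1\neq j_2$ and $i_1\le i_2$, then $C_{i_1,j_1}\setminus C_{i_2,j_2}=\{\sigma(j_1)\}$, a set difference of size $1$. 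Hence for every $\sigma$ only one of the $\Theta(n)$ partial chains $\mathcal{C}_j=\{C_{i,j}:i\in I\}$ can meet $\mathcal{A}$ at all; this $\Theta(n)$ gain, coming from coupling $\Theta(n)$ chains through a shared spine, is what beats the LP bottleneck that stumped your layer-by-layer analysis. There is a second issue you also anticipate --- the one surviving chain may contain many members of $\mathcal{A}$ --- and the paper handles it by counting only the \emph{first} member of $\mathcal{A}$ along each chain, then showing (via the observation that for any $D$ at most one $d\in D$ can have $D\setminus\{d\}\in\mathcal{A}$, an observation equivalent to your within-layer step) that conditioning on being first costs only a $(1-o(1))$ factor. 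So your within-layer double-counting bound is correct and even reappears inside the paper's argument, but on its own it cannot detect the cross-layer correlations that the partial-chain construction is designed to exploit.
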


One could also ask what happens if we forbid a fixed set difference of size $k$, instead of $1$ (where we think of $k$ as fixed and $n$ as varying). This turns out to be harder. In \cite{leadlong} we noted that the following family $\A ^{*}_k\subset {\mathcal P}[n]$ gives a lower bound of $\frac{1}{n^k}\binom {n}{\lfloor n/2\rfloor }$: supposing n is prime, let ${\mathcal A}_k^*$ consist of all sets $A$ of size $\lfloor n/2\rfloor $ which satisfy $\sum _{i\in A} i^d \equiv 0\pmod n$ for all $1\leq d\leq k$. In Section 3 we prove that this is also best possible up to a multiplicative constant.

\begin{thm} 
 \label{stickout k}
Let $k\in {\mathbb N}$. Suppose that ${\mathcal A} \subset {\mathcal P}[n]$ with $|A \setminus B| \neq k$ for all $A,B \in {\mathcal P}[n]$. Then $|{\mathcal A}| \leq \frac{C_k}{n^k}\binom {n}{\lfloor n/2\rfloor }$, where $C_k$ is a constant depending only on $k$.
\end{thm}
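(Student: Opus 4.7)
My plan is to prove Theorem \ref{stickout k} by induction on $k$, with the base case $k=1$ supplied by Theorem \ref{stickoutexactly1}.

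For the inductive step, suppose the bound holds for $k-1$ with constant $C_{k-1}$, and consider $\mathcal{A} \subset \mathcal{P}[n]$ with $|A \setminus B| \neq k$ for all $A, B \in \mathcal{A}$. The key observation is that, for each $i \in [n]$, the ``link''
\[
\mathcal{A}_i \;=\; \{A \in \mathcal{A} : i \notin A,\ A \cup \{i\} \in \mathcal{A}\}
\]
automatically satisfies a stronger forbidden condition. Indeed, for any $A, B \in \mathcal{A}_i$, since $i \notin B$ we have $|(A \cup \{i\}) \setminus B| = |A \setminus B| + 1$, and applying the hypothesis to the pair $(A \cup \{i\}, B) \in \mathcal{A}^2$ forces $|A \setminus B| \neq k-1$. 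Regarded as a subfamily of $\mathcal{P}([n] \setminus \{i\})$, the inductive hypothesis yields $|\mathcal{A}_i| \leq \frac{C_{k-1}}{(n-1)^{k-1}} \binom{n-1}{\lfloor (n-1)/2 \rfloor}$. Summing over $i$ then bounds the number of ``covering pairs'' in $\mathcal{A}$. Iterating with $S \in \binom{[n]}{j}$ in place of $\{i\}$ for $j < k$ --- defining $\mathcal{A}_S = \{A \in \mathcal{A} : A \cap S = \emptyset,\ A \cup S \in \mathcal{A}\}$ --- produces subfamilies satisfying $|A \setminus B| \notin \{k, k-j\}$, and the case $j = k-1$ in particular allows a direct appeal to Theorem \ref{stickoutexactly1}, giving an analogous bound on the number of $(k-1)$-chains in $\mathcal{A}$.

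The main obstacle is the ``antichain'' case, in which $\mathcal{A}$ lies essentially in a single middle layer $\binom{[n]}{m}$, so that the chain-count bounds above are vacuous. Here $\mathcal{A}$ is an independent set in the Johnson graph $J(n, m, m-k)$ (with $A \sim B$ iff $|A \cap B| = m-k$), and the appropriate tool is Hoffman's eigenvalue bound. This graph is regular of degree $\binom{m}{k}\binom{n-m}{k}$, which has order $n^{2k}$ for $m$ near $n/2$; an Eberlein-polynomial computation shows that its smallest eigenvalue has magnitude of order $n^k$, so its independence number is at most $(C_k/n^k)\binom{n}{m}$. The general (multi-layer) case is deduced by combining this per-layer bound with the chain-count estimates from the induction and with the rapid decay of $\binom{n}{m}$ away from $m = n/2$; the technical delicacy is to carry this out so that the overall bound has the correct leading factor $1/n^k$, rather than the $1/n^{k-1/2}$ that a naive layer-by-layer sum would produce.
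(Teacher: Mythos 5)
Your link/induction observation is sound: for $A,B\in\mathcal A_i$, applying the hypothesis to $(A\cup\{i\},B)$ indeed forces $|A\setminus B|\neq k-1$, and the generalisation to $\mathcal A_S$ for $|S|=j$ forcing $|A\setminus B|\neq k-j$ is also correct. Likewise, your rough eigenvalue heuristic for the distance-$k$ Johnson graph on a middle layer is plausible (e.g.\ for $k=2$, $m=n/2$, one finds $\lambda_{\min}\approx -n^2/8$ against degree $\approx n^4/64$, giving $\alpha\lesssim (8/n^2)\binom{n}{m}$), so the single-layer conclusion is likely attainable. But the proposal as written contains a genuine gap exactly where you flag a ``technical delicacy,'' and this is not a detail but the heart of the matter. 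Summing the per-layer Hoffman bound over the $\Theta(\sqrt n)$ layers near $n/2$ gives only $O(n^{-k+1/2})\binom{n}{n/2}$, and the chain-count estimates from the links do not repair this: $\sum_i|\mathcal A_i|$ (or $\sum_S|\mathcal A_S|$) bounds the number of covering pairs or short chains inside $\mathcal A$, not $|\mathcal A|$ itself, and in the extreme case where $\mathcal A$ occupies, say, $\sqrt n$ consecutive layers as an antichain-in-each-layer, the link counts are zero and the Hoffman bound alone gives $n^{-k+1/2}$. You have neither a mechanism for trading chain-counts against layer-counts nor a quantitative statement that being spread across layers forces many links, so the argument does not close.

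The paper's proof avoids this entirely by a different device: it splits $\mathcal B$ into residue classes $\mathcal B_\ell$ by $|B|\bmod k$, picks a random permutation $\sigma$, and forms ``partial chains'' $C_{i,S}=\sigma(\{1,\dots,ik\})\cup\sigma(S)$ with step $k$. The Frankl--F\"uredi theorem (applied to the index sets $S$, not to a layer of $\mathcal A$) shows that only an $O(n^{-k})$ fraction of the partial chains can meet $\mathcal B_0$, and an Erd\H os--Ko--Rado argument shows the first element of $\mathcal B_0$ on a partial chain is nearly uniform over sizes. Averaging over $\sigma$ then converts the $O(n^{-k})$ chain-fraction bound directly into $\sum_j |\mathcal B_0^{(j)}|/\binom{n}{j}\le O(n^{-k})$, with no loss of $\sqrt n$, because the chains cut across all layers simultaneously. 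This is precisely the cross-layer averaging mechanism your proposal lacks. Unless you can supply an analogous device --- for instance a single spectral argument on the whole middle slab rather than layer-by-layer, or a way to combine link-counts with antichain bounds via a weighted LYM-type inequality --- the induction-plus-Hoffman route does not yield the claimed $n^{-k}$.
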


Our notation is standard. We write $[n]$ for  $\{ 1,\ldots,n \}$, and $[a,b]$ for the interval  $\{ a,\ldots,b \}$. For a set $X$, we write ${\mathcal P}(X)$ for the power set of $X$ and $X^{(k)}$ for collection of all $k$-sets in $X$. We often suppress integer-part signs.

\section{Proof of Theorem \ref{stickoutexactly1}}

Our proof of Theorem \ref{stickoutexactly1} uses Katona's averaging method (see \cite{katona}), but modified in a key way. Ideally here, as in 
the proof of Sperner's theorem or Theorem \ref{tiltedsperner}, 
we would find configurations of sets covering ${\mathcal P}[n]$, so that each configuration has at most $C/n^{3/2}$ proportion of its 
elements in ${\mathcal A}$, for any family ${\mathcal A}$ satisfying $|A\setminus B|\neq 1$ for $A,B\in {\mathcal A}$. Then, provided 
these configurations cover ${\mathcal P}[n]$ uniformly,
we could count incidences between elements of ${\mathcal A}$ and these configurations 
to get an upper bound on $|{\mathcal A}|$.

However, we do not see how to find such configurations. So instead our approach
is as follows. Rather than insisting that each of the sets in our configuration
is well-behaved (in the sense above), we will only require that  
\emph{most} of them have at most $C/n^{3/2}$ proportion of their elements in ${\mathcal A}$. It turns out that this can be achieved, and that it is 
good enough for our purposes.

\begin{proof}
 To begin with, remove all elements in ${\mathcal A}$ of size smaller than $n/2 - n^{2/3}$ or larger than 
 $n/2 + n^{2/3}$. By Chernoff's inequality (see Appendix A of \cite{alonspen}), we have removed at most $o(\frac{1}{n}\binom {n}{n/2})$ sets. Let ${\mathcal B}$ denote the remaining sets in $\mathcal A$. It suffices to show that $|{\mathcal B}| \leq \frac {2+o(1)}{n}\binom {n}{n/2}$.

We write $I = [1, n/2 + n^{2/3}]$ and $J = [n/2 + n^{2/3} + 1,n]$ so 
that $[n] = I \cup J$. Let us choose a permutation $\sigma \in S_n$ uniformly at random. Given this choice of $\sigma $, for all $i\in I$, $j\in J$ let $C_{i,j} = \{\sigma (1),\ldots \sigma (i)\} 
 \cup \{\sigma (j)\}$. Let ${\mathcal C}_j = \{C_{i,j}: i\in I\}$, and call these sets `partial chains'.  Also let 
${\mathcal C} = \bigcup _{j\in J} {\mathcal C}_j$.

Now, for any choice of $\sigma \in S_n$, at most one of the partial chains of ${\mathcal C}$ can contain an element of ${\mathcal B}$. Indeed, suppose both $C_{i_1,j_1} = C_{i_1} \cup \{\sigma (j_1)\}$ and 
 $C_{i_2,j_2} = C_{i_2} \cup \{\sigma (j_2)\}$ lie in 
 ${\mathcal A}$ for distinct $j_1, j_2 \in J$. Since $C_{i_1}$ and $C_{i_2}$ are elements of a 
 chain, without loss of generality we may assume $C_{i_1} \subset C_{i_2}$. But then $(C_{i_1} \cup \{\sigma (j_1)\}) 
 \setminus (C_{i_2} \cup \{\sigma (j_2)\}) = \{ \sigma (j_1)	\}$, which contradicts $|A\setminus B| \neq 1$ for all $A,B \in {\mathcal B}$.

Note that the above bound alone does not guarantee the upper bound on $|{\mathcal A}|$ stated in the theorem, since a fixed partial chain ${\mathcal C}_i$ may contain many elements of ${\mathcal A}$. We now show that this cannot happen too often.

 For $i\in I$ and $j\in J$, let $X_{i,j}$ denote the random variable given by
  \[X_{i,j} = \bigg\{ \begin{array}{ll}
         1 & \mbox{ if } C_{i,j} \in {\mathcal B} \mbox{ and } C_{k,j} \notin {\mathcal B} 
               \mbox{ for } k < i;\\
         0 & \mbox{ otherwise.}\end{array} \]
 From the previous paragraph, we have 
  \begin{equation}
   \label{Xijinequality}
   \sum _{i,j} X_{i,j} \leq 1
  \end{equation}
 where both here and below the sum is taken over all $i \in I$ and $j\in J$. Taking expectations on both sides of (\ref{Xijinequality}) this gives 
  \begin{equation}
   \label{expectationupperbound}
   \sum _{i, j} \mathbb{E}(X_{i,j}) \leq 1.
  \end{equation}
 Rearranging we have
  \begin{equation}
   \label{expectationXij}
   \begin{split}
   \sum _{i,j} \mathbb{E}(X_{i,j}) &= \sum _{i,j} \sum _{B \in {\mathcal B}} \mathbb {P}(C_{i,j} = B \mbox{ and } C_{k,j} \notin {\mathcal B} 
   \mbox{ for } k <i).
   \end{split} 
 \end{equation}

 We now bound $\mathbb {P}(C_{i,j} = B \mbox{ and } C_{k,j}\notin {\mathcal B} \mbox{ for } k<i)$ for sets $B\in {\mathcal B}$. 
 Note that we can only have $C_{i,j} = B$ if $|B| = i+1$. Furthermore, for such $B$, since $C_{i,j}$ is equally likely to be any subset of $[n]$ of size $i+1$, we have $\mathbb{P}(C_{i,j} = B) = 1 / \binom {n}{i+1}$. We will show that for all such $B$
 \begin{equation}
  \label{looselittlebyrestricting}
  \mathbb {P}( C_{i,j} = B \mbox { and } C_{k,j} \notin {\mathcal B} \mbox{ for } k<i) 
  =  (1 - o(1)) \mathbb {P}( C_{i,j} = B )
 \end{equation}
 To see this, note that given any set $D \subset [n]$, there is at most one element $d\in D$ such that $D-d \in \mathcal B$. 
 Indeed, $|(D-d')\setminus (D-d)| =1$ for any distinct choices of $d,d'\in D$. Recalling that 
 $C_{k,j} = C_{i,j} -\{\sigma (k+1),\ldots ,\sigma (i)\}$ for all $k<i$ and that $\sigma (k+1)$ is chosen uniformly at random from 
 the $k+1$ elements of $C_{k+1,j}-\{\sigma (j)\}$, we see that for $k + 1\geq n/2 - n^{2/3}$ we have 
  \begin{equation}
   \label{Ckjprobability}
   {\mathbb P}(C_{k,j} \notin {\mathcal B} | C_{k+1,j},\ldots , C_{i,j}) \geq (1 - \frac {1}{k+1}) \geq (1 - \frac {1}{n/2 - n^{2/3}}).
  \end{equation}
 Also, since $\mathcal B$ contains no sets of size less than $n/2 - n^{2/3}$, for $k + 1 < n/2 - n^{2/3}$ we have
  \begin{equation}
   \label{trivialboundCkj}
    {\mathbb P}(C_{k,j} \notin {\mathcal B} | C_{k+1,j},\ldots , C_{i,j}) = 1.
  \end{equation}
 But now by repeatedly applying (\ref{Ckjprobability}) and (\ref{trivialboundCkj}) we get that 
 for any $B$ of size $i+1\in [n/2 - n^{2/3}, n/2 + n^{2/3}]$ we have 
  \begin{equation*}
   \begin{split}
    {\mathbb P} ( C_{i,j} = B \mbox{ and } C_{k,j} \notin { \mathcal B } \mbox{ for } k < i ) 
     & \geq (1- \frac {1} {n/2 -n^{2/3}} )^{(i - n/2 -n^{2/3})} {\mathbb P}( C_{i,j} = B )\\
     & \geq (1- \frac{1}{n/2 -n^{2/3}})^{2n^{2/3}} {\mathbb P}(C_{i,j} = B)\\
     & = (1 - o(1)){\mathbb P} ( C_{i,j} = B ).
   \end{split}
  \end{equation*}
 Now combining (\ref{looselittlebyrestricting}) with (\ref{expectationupperbound}) and (\ref{expectationXij}) we obtain 
 \begin{equation*}
  \begin{split}
  1& \geq \sum _{i, j} \mathbb{E}(X_{i,j}) \\
 & = \sum _{i,j} \sum _{B \in {\mathcal B}} \mathbb {P}(C_{i,j} = B \mbox{ and } C_{k,j} \notin {\mathcal B} 
   \mbox{ for } k <i)\\
 & = \sum _{i,j} 
     \sum _{B \in {\mathcal B}^{(i+1)}} (1 - o(1)) \mathbb {P}( C_{i,j} = B)\\
 & = (1 - o(1)) \sum _{i,j} \frac { |{\mathcal B}^{(i+1)}| }{\binom {n}{i+1}}\\
 & = (1-o(1))|J| \sum _i \frac { |{\mathcal B}^{(i+1)}| }{\binom {n}{i+1}}.
  \end{split}
 \end{equation*}
Since $|J| = n/2 - n^{2/3}$, this shows that
  \begin{equation*} 
   \frac{2 + o(1)}{n} \geq \sum _{i} \frac {|{\mathcal B}^{(i+1)} |}{\binom {n}{i+1}}
  \end{equation*}
giving $|{\mathcal B}| \leq \frac{2 + o(1)}{n} \binom {n}{n/2}$, as required. 
\end{proof}

\section{Proof of Theorem \ref{stickout k}}

The proof of Theorem \ref{stickout k} will use of the following result of Frankl and F\"uredi \cite{FrFu}.

\begin{thm}[Frankl-F\"uredi]
 \label{FF}
 Let $r,k\in \mathbb {N}$ with $0\leq k < r$. Suppose that ${\mathcal A} \subset [n]^{(r)}$ with $|A \cap B| \neq k$ for all $A,B \in {\mathcal A}$. Then $|{\mathcal A}| \leq d_r n^{\max {(k,r-k-1)}}$ where $d_r $ is a constant depending only on $r$.
\end{thm}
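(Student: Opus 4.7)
My plan is to use the $\Delta$-system (sunflower) method of Deza, Frankl and F\"uredi, the standard approach for forbidden-intersection problems in extremal set theory. The extremal construction is the ``star'' of all $r$-sets containing a fixed $(k+1)$-set, which has size $\binom{n-k-1}{r-k-1} = \Theta(n^{r-k-1})$ and manifestly forbids intersection size $k$; a complementary construction yields the $\Theta(n^k)$ bound. The goal is to show that every large family must inherit an approximate version of such a kernel structure.

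The argument would proceed by induction on $r$, using F\"uredi's partition strengthening of the Erd\H{o}s--Rado sunflower lemma to reduce $\mathcal{A}$, at the cost of a multiplicative factor $T = T(r)$, to a subfamily $\mathcal{A}'$ with a common kernel $Y \subseteq [n]$ of some size $j$. By the identity $|A \cap B| = |Y| + |(A \setminus Y) \cap (B \setminus Y)|$ for $A, B \in \mathcal{A}'$, the petal family $\{A \setminus Y : A \in \mathcal{A}'\}$ is an $(r - j)$-uniform family on $[n] \setminus Y$ forbidding intersection size $k - j$. Split by cases: if $j > k$, the petal condition is vacuous and $|\mathcal{A}'| \leq \binom{n-j}{r-j} = O(n^{r-j}) \leq O(n^{r-k-1})$; if $j = k$, the petals are pairwise-intersecting and bounded by Erd\H{o}s--Ko--Rado as $O(n^{r-k-1})$; if $j < k$, induction on $(r', k') = (r-j, k-j)$ gives $|\mathcal{A}'| \leq C_{r-j}\, n^{\max(k-j,\, r-k-1)} \leq C_{r-j}\, n^{\max(k,\, r-k-1)}$. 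The base case $r = 1$, $k = 0$ gives $|\mathcal{A}| \leq 1$, and the final constant $C_r = T \cdot \max_{j<k} C_{r-j}$ depends only on $r$.

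The main obstacle is the partition step: F\"uredi's lemma delivers a common-kernel subfamily at the cost of a constant factor $T(r)$, but establishing this requires iterating the sunflower lemma several times, and $T(r)$ grows tower-type in $r$. This is acceptable since the theorem only demands a constant depending on $r$. A secondary point is handling the three sub-cases uniformly: the $j > k$ case is the trivial cardinality bound, the $j = k$ case crucially invokes EKR on the petals (which applies because a pair of disjoint petals would realise intersection exactly $k$), and the $j < k$ case uses the inductive hypothesis --- all three contribute within the target exponent $\max(k, r-k-1)$.
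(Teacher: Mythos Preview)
The paper does not prove this statement: Theorem~\ref{FF} is quoted from Frankl--F\"uredi \cite{FrFu} and used as a black box in the proof of Theorem~\ref{stickout k}. So there is no in-paper argument to compare against; your sketch is being measured against the original Frankl--F\"uredi proof, which does indeed proceed via the $\Delta$-system method.

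That said, your induction as written has a genuine gap. In the case $j<k$ you pass to the petal family and invoke the inductive hypothesis for $(r',k')=(r-j,k-j)$, but nothing in the version of F\"uredi's lemma you quote prevents $j=0$. If the kernel is empty you are appealing to the theorem for the \emph{same} pair $(r,k)$, which is circular. This is not a vacuous worry: take $r=2$, $k=1$, and let $\mathcal{A}$ be a perfect matching. Every pairwise intersection is empty, so any common-kernel subfamily $\mathcal{A}'$ produced by the lemma has $Y=\emptyset$, and your case split returns you to $(2,1)$.

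The fix is to use the full strength of F\"uredi's intersection-structure lemma rather than just the ``common kernel'' consequence. In its strong form one obtains $\mathcal{A}'$ together with a \emph{down-closed} family $\mathcal{J}\subseteq 2^{[r]}$ recording all realised intersection patterns, with each $J\in\mathcal{J}$ witnessed as the core of a large sunflower inside $\mathcal{A}'$. Since intersection size $k$ is forbidden and $\mathcal{J}$ is down-closed, every $J\in\mathcal{J}\setminus\{[r]\}$ has $|J|\le k-1$; hence all pairwise intersections in $\mathcal{A}'$ have size at most $k-1$. A direct packing count (each $k$-subset of $[n]$ lies in at most one member of $\mathcal{A}'$) then gives $|\mathcal{A}'|\le \binom{n}{k}/\binom{r}{k}=O(n^{k})\le O(n^{\max(k,r-k-1)})$, with no induction needed. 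Your three-case split with EKR is a reasonable heuristic picture, but the actual mechanism that closes the argument is the down-closure of the intersection pattern, not recursion on $r$.
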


We will also make use of the Erd\H{o}s-Ko-Rado theorem \cite{EKR}.

\begin{thm}[Erd\H{o}s-Ko-Rado]
 \label{EKR}
 Suppose that $k\in \mathbb {N}$ and that $2k\leq n$. Then any family ${\mathcal A} \subset [n]^{(k)}$ with $A \cap B \neq \emptyset $ for all $A,B \in {\mathcal A}$ satisfies $|{\mathcal A}| \leq \binom {n-1}{k-1}$.
\end{thm}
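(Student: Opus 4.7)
The plan is to prove Erd\H{o}s--Ko--Rado by Katona's cyclic permutation method, which meshes naturally with the averaging philosophy used in Section~2. Fix a cyclic ordering $\pi$ of $[n]$, and call a $k$-set $A\subset [n]$ an \emph{arc with respect to $\pi$} if its elements appear consecutively on the cycle. The argument will be a double count of incidences $(A,\pi)$ where $A\in \mathcal{A}$ and $A$ is an arc of $\pi$, so the only real content is a local lemma about arcs in a single cycle.

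The key local lemma is: if $\mathcal{F}$ is any pairwise intersecting family of length-$k$ arcs inside one cyclic ordering of $[n]$ (with $n\geq 2k$), then $|\mathcal{F}|\leq k$. I would prove this by picking any $A\in \mathcal{F}$ and, after relabelling, taking its elements to occupy cyclic positions $1,2,\ldots,k$. The arcs meeting $A$ are exactly the $2k-1$ arcs whose starting positions lie in $\{2-k, 3-k, \ldots, k\}$. Pair up the arcs with starting positions $s$ and $s+k$ for $s\in \{2-k,\ldots,0\}$; each such pair occupies $2k$ consecutive cyclic positions and so consists of two disjoint arcs whenever $n\geq 2k$. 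Since $\mathcal{F}$ is intersecting, at most one arc from each pair lies in $\mathcal{F}$. Together with the unpaired arc $A$ itself, this gives $|\mathcal{F}|\leq 1 + (k-1) = k$.

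Now I count pairs $(A,\pi)$ two ways. For fixed $A\in \mathcal{A}$, collapsing $A$ to a single super-element and arranging the resulting $n-k+1$ units cyclically shows that the number of cyclic orderings in which $A$ is an arc equals $k!(n-k)!$, so the total incidence count is $|\mathcal{A}|\cdot k!(n-k)!$. For fixed $\pi$, the sets in $\mathcal{A}$ that happen to be arcs of $\pi$ form a pairwise intersecting family of arcs, so by the local lemma their number is at most $k$; summing over the $(n-1)!$ cyclic orderings of $[n]$ gives the upper bound $k\cdot (n-1)!$. Comparing,
\[
|\mathcal{A}|\cdot k!(n-k)! \;\leq\; k\cdot (n-1)!,
\]
which rearranges to $|\mathcal{A}|\leq \binom{n-1}{k-1}$, as required.

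The only substantive step is the local lemma, and even there the main obstacle is purely combinatorial bookkeeping: one must use the hypothesis $2k\leq n$ precisely to ensure that within each pair of arcs whose starting positions differ by $k$ the two arcs are actually disjoint (if $n<2k$ they would overlap and the pairing argument collapses). Every other piece of the argument is a routine two-way count.
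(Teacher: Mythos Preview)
Your argument is correct: this is exactly Katona's cyclic-permutation proof, and the bookkeeping in the local lemma (the $k-1$ disjoint pairs of arcs plus $A$ itself, using $n\geq 2k$ to guarantee disjointness within each pair) and in the double count (each $k$-set is an arc in $k!(n-k)!$ of the $(n-1)!$ cyclic orderings) is carried out accurately.

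There is nothing to compare against, however: the paper does not prove the Erd\H{o}s--Ko--Rado theorem at all. Theorem~\ref{EKR} is stated as a quoted result with a citation to \cite{EKR}, and is then used as a black box inside the proof of Theorem~\ref{stickout k} (to bound the number of $k$-subsets $E$ of a set $D$ with $D\setminus E\in\mathcal{B}_0$). It is worth noting that the very proof you supply is the one the paper cites as reference~\cite{katona}, and whose averaging philosophy the authors explicitly credit as the inspiration for their own method in Section~2; so your write-up is entirely in keeping with the spirit of the paper, even though the paper itself omits it.
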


We are now ready for the proof of the main result. Given a set $U \subset [n]$ and a permutation $\sigma \in S_n$, below we write $\sigma (U) = \{\sigma (u): u\in U\}$.

\begin{proof}[Proof of Theorem \ref{stickout k}.]
We will assume for convenience that $n$ is a multiple of $3k$ -- this assumption can easily be removed. To begin, remove all elements in ${\mathcal A}$ of size smaller than $n/2 - n^{2/3}$ or larger than 
 $n/2 + n^{2/3}$. By Chernoff's inequality (see Appendix A of \cite{alonspen}), we have removed at most $o(\frac{1}{n^k}\binom {n}{n/2})$ sets. Let ${\mathcal B}$ denote the remaining sets in $\mathcal A$. For each $l\in [0,k-1]$, let 
 \begin{equation*} 
 {\mathcal B}_l = \{B \in {\mathcal B}:|B| \equiv l\pmod k\}.
 \end{equation*} 
To prove the theorem it suffices to prove that for all $l\in [0,k-1]$ we have $|{\mathcal B}_l| \leq \frac {c'}{n^k}\binom {n}{n/2}$, where $c'=c'(k)>0$. We will show this when $l=0$ as the other cases are similar.

Let $I = [1, n/3]$ and $J = [n/3 +1,n]$ so 
that $[n] = I \cup J$. Let us choose a permutation $\sigma \in S_n$ uniformly at random. Given this choice of $\sigma $, for all $i\in [n/3k]$ and $S\in {J}^{(n/3)}$ let 
\begin{equation*}
C_{i,S} = \sigma (\{1,\ldots ,ik\}) \cup \sigma (S).
\end{equation*} 
Let ${\mathcal C}_S = \{C_{i,S}: i\in [n/3k]\}$ and call these sets `partial chains'. We write 
\begin{equation*}
{\mathcal D}  =\{ S \in \binom {J}{n/3}: \C _S \cap \B _0 \neq \emptyset \} \subset \binom {J}{n/3}.
\end{equation*} 

We claim that for any choice of $\sigma \in S_n$, we have
\begin{equation}
 \label{setdiffkineq}
 |{\mathcal D} | \leq \frac{d_{2k} (12k^2)^k}{n^k} \binom {|J|}{n/3},
\end{equation}
where $d_{2k}$ is as in Theorem \ref{FF}. Indeed otherwise, by averaging, there exists $T \in J^{(n/3 -2k)}$ for which the family 
\begin{equation*}
{\mathcal D} _T = \Big \{U \in (J\setminus T)^{(2k)}: U \cup T \in {\mathcal D} \Big \} \subset {(J\setminus T)}^{(2k)}
\end{equation*} 
satisfies $|{\mathcal D} _T| >  \frac{d_{2k} (12k^2)^k}{n^k} \binom {|J\setminus T|}{2k}$. This gives that
\begin{equation*}
 |{\mathcal D} _T| >  \frac{d_{2k} (12k^2)^k}{n^k} \binom {|J\setminus T|}{2k}  \geq \frac{d_{2k} (12k^2)^k}{n^k} \frac{|J\setminus T|^{2k}}{(2k)^{2k}} = \frac{d_{2k} |J\setminus T|^{2k}}{(n/3)^k} \geq {d_{2k}} |J\setminus T|^k,
\end{equation*}
since $|J\setminus T| = n/3 + 2k \geq n/3$. However, applying Theorem \ref{FF} to ${\mathcal D}_T$ with $r=2k$ we find $U, U'\in {\mathcal D_T}$ with $|U\cap U'| = k$. This then gives $C_{i,U\cup T}, C_{i',U'\cup T} \in {\mathcal B}_0$ for some $i,i'\in [n]$. Without loss of generality, we have $i\leq i'$. But then, as $\sigma (\{1,\ldots ,ik\}) \subset \sigma(\{1,\ldots ,i'k\})$, we have
\begin{equation*}
|C_{i,U\cup T} \setminus C_{i',U'\cup T}| = |\sigma (U) \setminus \sigma (U')| = |U\setminus U'| =|U| - |U \cap U'| = 2k - k = k.
\end{equation*} 
However $|A\setminus B| \neq k$ for all $A,B \in {\mathcal B}_0$. This contradiction shows that (\ref{setdiffkineq}) must hold.

Now the bound (\ref{setdiffkineq}) shows that for any choice of $\sigma \in S_n$, at most $c_k/n^k$ proportion of the sets ${\mathcal C}_S$ can contain elements of ${\mathcal B}_0$. Note however that any of these partial chains may still contain many elements from ${\mathcal B}_0$. As in the proof of Theorem \ref{stickoutexactly1}, we now show that this cannot happen too often.

For $i\in [n/3k]$ and $S\in {J}^{(n/3)}$, let $X_{i,S}$ denote the random variable given by
  \[X_{i,S} = \bigg\{ \begin{array}{ll}
         1 & \mbox{ if } C_{i,S} \in {\mathcal B}_0 \mbox{ and } C_{i',S} \notin {\mathcal B}_0 
               \mbox{ for all } i' < i;\\
         0 & \mbox{ otherwise.}\end{array} \]
 From the previous paragraph, we have 
  \begin{equation}
   \label{XiSinequality}
   \sum _{i,S} X_{i,S} \leq  \frac{d_{2k} (12k^2)^k}{n^k} \binom {|J|}{n/3}
  \end{equation}
 where both here and below the sum is taken over all $i \in [n/3k]$ and $S\in {J}^{(n/3)}$. Taking expectations on both sides of (\ref{Xijinequality}) this gives 
  \begin{equation}
   \label{expectationupperboundXiS}
   \sum _{i, S} \mathbb{E}(X_{i,S}) \leq  \frac{d_{2k} (12k^2)^k}{n^k} \binom {|J|}{n/3}.
  \end{equation}
 Rearranging we have
  \begin{equation}
   \label{expectationXiS}
   \begin{split}
   \sum _{i,S} \mathbb{E}(X_{i,S}) &= \sum _{i,S} \sum _{B \in {\mathcal B}_0} \mathbb {P}(C_{i,S} = B \mbox{ and } C_{i',S} \notin {\mathcal B}_0 
   \mbox{ for } i' <i).
   \end{split} 
 \end{equation}

 We now bound $\mathbb {P}(C_{i,S} = B \mbox{ and } C_{i',S}\notin {\mathcal B}_0 \mbox{ for } i'<i)$ for sets $B\in {\mathcal B}_0$. Note that we can only have $C_{i,S} = B$ if $|B| = ik+n/3$. Furthermore, for such $B$, since $C_{i,S}$ is equally likely to be any subset of $[n]$ of size $ik+n/3$, we have $\mathbb{P}(C_{i,S} = B) = 1 / \binom {n}{ik+n/3}$. We will prove that for all such $B$
 \begin{equation}
  \label{looselittlebyrestricting'}
  \mathbb {P}( C_{i,S} = B \mbox { and } C_{i',S} \notin {\mathcal B}_0 \mbox{ for } i'<i) 
  =  (1 - o(1)) \mathbb {P}( C_{i,S} = B )
 \end{equation}
To see this, note that given any set $D \subset [n]$ and two sets $E_1,E_2\in D^{(k)}$ for which $D\setminus E_1 ,D\setminus E_2\in {\mathcal B}_0$, we must have $E_1\cap E_2 \neq 0$ -- otherwise $|(D\setminus E_1) \setminus (D\setminus E_2)| = k$. Therefore, for $|D|\geq 2k$, by Theorem \ref{EKR}, there are at most $\binom {|D|-1}{k-1} = \frac{k}{|D|}\binom {|D|}{k}$ choices of $E \in {D}^{(k)}$ with $D \setminus E \in {\mathcal B}_0$. Recalling that  $C_{i',S} = C_{i,S} -\{\sigma (i'k+1),\ldots ,\sigma (ik)\}$ for all $i'<i$ and that $\{\sigma (i'k+1),\ldots ,\sigma ((i'+1)k)\}$ is chosen uniformly at random among all $k$-sets in $\{\sigma (1),\ldots ,\sigma ((i'+1)k)\}$, we see that for $(i' + 1)k + n/3 \geq (n/2 - n^{2/3})$ we have 
  \begin{equation}
   \label{CiSprobability}
   {\mathbb P}(C_{i',S} \notin {\mathcal B}_0 | C_{i'+1,S},\ldots , C_{i,S}) \geq (1 - \frac {k}{(i'+1)k}) \geq (1-\frac{k}{n/6 - n^{2/3}}).
  \end{equation}
 Also, since $\mathcal B _0$ contains no sets of size less than $n/2 - n^{2/3}$, for $(i' + 1)k + n/3 < (n/2 - n^{2/3})$ we have
  \begin{equation}
   \label{trivialboundCiS}
    {\mathbb P}(C_{i',S} \notin {\mathcal B}_0 | C_{i'+1,S},\ldots , C_{i,S}) = 1.
  \end{equation}
 But now by repeatedly applying (\ref{CiSprobability}) and (\ref{trivialboundCiS}), we get that 
 for any $B$ of size $ik + n/3\in [n/2 - n^{2/3}, n/2 + n^{2/3}]$ we have 
  \begin{equation*}
   \begin{split}
    {\mathbb P} ( C_{i,S} = B \mbox{ and } C_{i',S} \notin { \mathcal B }_0 \mbox{ for } i' < i ) 
     & \geq (1- \frac {k}{n/6 - n^{2/3}} )^{2n^{2/3}/k}{\mathbb P}( C_{i,S} = B )\\
     & \geq (1- \frac{k}{n/6 -n^{2/3}})^{2n^{2/3}/k} {\mathbb P}(C_{i,S} = B)\\
     & = (1 - o(1)){\mathbb P} ( C_{i,S} = B ).
   \end{split}
  \end{equation*}
 Now combining (\ref{looselittlebyrestricting'}) with (\ref{expectationupperboundXiS}) and (\ref{expectationXiS}) we obtain 
 \begin{equation*}
  \begin{split}
   \frac{d_{2k} (12k^2)^k}{n^k} \binom {|J|}{n/3} & \geq \sum _{i, S} \mathbb{E}(X_{i,S}) \\
 & = \sum _{i,S} \sum _{B \in {\mathcal B}_0} \mathbb {P}(C_{i,S} = B \mbox{ and } C_{i',S} \notin {\mathcal B}_0
   \mbox{ for } i' <i)\\
 & = \sum _{i,S} 
     \sum _{B \in {\mathcal B}_0^{(ik+n/3)}} (1 - o(1)) \mathbb {P}( C_{i,S} = B)\\
 & = (1 - o(1)) \sum _{i,S} \frac { |{\mathcal B}_0^{(ik+n/3)}| }{\binom {n}{ik+n/3}}\\
 & = (1-o(1))\binom {|J|}{n/3} \sum _{j\in [n]} \frac { |{\mathcal B}_0^{(j)}| }{\binom {n}{j}}.
  \end{split}
 \end{equation*}
But this shows that
  \begin{equation*} 
   \frac{d_{2k} (12k^2)^k}{n^k}  \geq \sum _{j\in [n]} \frac {|{\mathcal B}_0^{(j)} |}{\binom {n}{j}}
  \end{equation*}
giving $|{\mathcal B}_0| \leq  \frac{d_{2k} (12k^2)^k}{n^k} \binom {n}{n/2}$, as required. 
\end{proof}

\section{Concluding remarks}

It would be very interesting to determine the true answer in Theorem \ref{stickoutexactly1}, i.e. to remove the factor of 2. This is related to the well-known problem of finding the maximum size of a set system in which no two members are at
Hamming distance 2, where there is also a `gap' of multiplicative constant 2. Indeed, our proof of Theorem \ref{stickoutexactly1} can be modified to show that the answers to these two questions are asymptotically equal. See Katona \cite{katonagap} for background on this problem.

\section*{Acknowledgement}

We would like to thank the anonymous referee for suggesting pursuing the extension of Theorem \ref{stickoutexactly1} to Theorem \ref{stickout k}.

% Bibliography

\end{document}